\numberwithin{equation}{section}
\newtheorem{thm}{Theorem}[section]
\newtheorem{cor}[thm]{Corollary}
\newtheorem{lem}[thm]{Lemma}
\newtheorem{prop}[thm]{Proposition}
\newtheorem{rem}[thm]{Remark}
\theoremstyle{definition}
\newtheorem{defn}[thm]{Definition}
\theoremstyle{remark}
\newtheorem{example}[thm]{Example}
\DeclareMathOperator\prmx{fdes}
\DeclareMathOperator\rank{rank}
\DeclareMathOperator\height{ht}
\DeclareMathOperator\ud{ud}
\DeclareMathOperator\dist{dist}
\def\Z{\mathbb{Z}}
\newcommand\set[1]{\left\{#1\right\}}
\newcommand\deli{:}
\def\IS{\mathcal{IS}}
\newcommand\LFp[1]{\mathcal{L}\mathcal{F}_{#1}}
\newcommand\Hw[1]{\mathcal{H}_{#1}}
\newcommand\wHw[1]{w\mathcal{H}_{#1}}
\newcommand\uline[1]{{#1}}
\newcommand\wtd[1]{{#1}}
\newcommand\pex[1]{^{(#1)}}
\title
{On (102,000)-avoiding inversion sequences}
\author{Sangwook Kim$^\dag$}
\address[Sangwook Kim]{Department of Mathematics, Chonnam National University, Gwangju, 61186, South Korea}
\email{swkim.math@chonnam.ac.kr}
\thanks{\dag Corresponding author}
\author{Seunghyun Seo}
\address[Seunghyun Seo]{Department of Mathematics Education, Kangwon National University, Chuncheon, 24341, South Korea}
\email{shyunseo@kangwon.ac.kr}
\thanks{}
\author{Heesung Shin}
\address[Heesung Shin]{Department of Mathematics, Inha University, 100 Inharo, Michuhol, Incheon, 22212, South Korea}
\email{shin@inha.ac.kr}
\thanks{}
\keywords{
inversion sequences, 
pattern avoidance,
simple $H$-paths}
\subjclass[2020]{Primary 05A19; Secondary 05A05, 05A15}
\begin{document}

\begin{abstract}
   In this article, we study $(102,000)$-avoiding inversion sequences with a fixed number of distinct elements.
   By introducing simple $H$-paths, we derive the trivariate generating function for these inversion sequences with respect to their length, number of distinct elements, and rank.
   As consequences, we obtain an explicit formula for the number of $(102,000)$-avoiding inversion sequences with fixed length and number of distinct elements
   and we also provide a formula for those with fixed number of distinct elements and rank.
   In particular, we show that both the number of $(102,000)$-avoiding inversion sequences with a fixed number of distinct elements whose maximum element occurs exactly once
   and the number of those whose rank is zero are given by the $3$-Fuss-Catalan numbers.
\end{abstract}

\maketitle
    

\section{Introduction}

An \emph{inversion sequence of length \( n \)} is an integer sequence \( e=(e_1, e_2, \dots, e_n) \) satisfying 
\( 0 \leq e_i <i \) for all \( i \in [n] \).
While pattern avoidance in permutations has been extensively studied for more than fifty years (see Kitaev~\cite{Kitaev11} for comprehensive survey), 
the study of pattern avoidance in inversion sequences is a more recent development,
initiated independently around 2015 by Corteel et al.~\cite{CMSW16}, and Mansour and Shattuck~\cite{MS15}.
These pioneering works investigated inversion sequences avoiding patterns of length three.
Subsequently, Martinez and Savage~\cite{MartinezSavage18} reformulated the notion of length-three 
pattern from a word of length three to a triple of binary relations.
Yan and Lin~\cite{YanLin20} later conducted a systematic study of inversion sequences avoiding pairs of length-three patterns, resolving 78 cases and identifying 32 open ones.
Kotsireas, Mansour, and Y{\i}ld{\i}r{\i}m~\cite{KMY24} enumerated eight of these open cases using generating trees and the kernel method, 
and one more case was solved independently by Chen and Lin~\cite{ChenLin24} and Pantone~\cite{Pantone24}.
Most recently, Testart~\cite{Testart24} completed the enumeration of inversion sequences avoiding one or two patterns of length three. 
In particular, Testart~\cite{Testart24} provided minimal polynomials for the generating functions for four cases, 
including that of $(102,000)$-avoiding inversion sequences.

In some cases, statistics on inversion sequences refine the enumerative results for pattern-avoiding inversion sequences and reveal connections with statistics on other combinatorial families.
Such statistics include the number of distinct element, the number of zeros, the number of repeats, the number of maximal elements, and the maximum value 
(see~\cite{CJL19, CMSW16, MS15, MartinezSavage18, YanLin20}). 
In this article, we focus on the statistic $\dist(e)$, the number of distinct elements in an inversion sequence $e$.
Several studies have investigated the enumeration of pattern-avoiding inversion sequences with a given length and a fixed number of distinct elements.
Corteel et al.~\cite{CMSW16} derived a recurrence relation for the case of $000$-avoiding inversion sequences.
Martinez and Savage~\cite{MartinezSavage18} studied the case avoiding $(100, 011)$,
while the cases avoiding $(000, 101)$ and $(000,110)$ were conjectured by Martinez and Savage~\cite{MartinezSavage18} and later confirmed by Cao, Jin and Lin~\cite{CJL19}.

In~\cite{HKSS24}, Huh et al. studied three statistics on $(101,102)$-avoiding and 
$(101, 021)$-avoiding inversion sequences, respectively.
They derived enumerative results for these pattern-avoiding inversion sequences with respect to considered statistics 
and established correspondences with statistics on other combinatorial objects including 
Schr\"{o}der paths without triple descents, $(2341, 2431, 3241)$-avoiding permutations, and weighted ordered trees
by introducing the notion of $F$-paths.
Subsequently, Huh et al.~\cite{HKSS25} introduced a statistics called \emph{rank} on $102$-avoiding inversion sequences, extending one of the statistic defined for $(101, 102)$-avoiding inversion sequences  in~\cite{HKSS24}.
They constructed a bijection between the set of $102$-avoiding inversion sequences with a fixed rank and the set of $2$-Schr\"{o}der paths having neither peaks nor valleys that end with a diagonal step and possessing a fixed number of blocks by introducing labeled $F$-paths.
They also enumerated the doubly-avoiding cases where the second pattern $\tau \in \{ 001, 011, 012, 021, 110, 120, 201, 210 \}$ is also avoided. 

In this article, we study \( (102, 000) \)-avoiding inversion sequences with a fixed number of distinct elements.
We introduce \emph{weighted $H$-walks}, a variant of labeled $F$-paths, and show that they are in bijection with $102$-avoiding inversion sequences.
For $(102,000)$-avoiding inversion sequences, these weighted $H$-walks specialize to a class of paths that we call \emph{simple $H$-paths}.
Using simple $H$-paths, we derive the trivariate generating function for the number of $(102,000)$-avoiding inversion sequences $e$ with respect to their length, $\dist (e)$ and $\rank (e)$.
As a consequence, we obtain an explicit formula for the number of $(102,000)$-avoiding inversion sequences of a given length and with a fixed number of distinct elements.
Furthermore, we show that the number of $(102,000)$-avoiding inversion sequences whose number of distinct elements is $m$ and whose maximum element occurs only once is given by the $3$-Fuss-Catalan number (A002293 in OEIS~\cite{OEIS}).
The number of $(102,000)$-avoiding inversion sequences $e$ with fixed $\dist(e)$ and $\rank (e)$ corresponds to OEIS sequence A355174~\cite{OEIS}.
In particular, the number of $(102,000)$-avoiding inversion sequences $e$ with fixed $\dist(e)$ and $\rank(e) = 0$ is counted by the $3$-Fuss-Catalan number.
The number of $(102,000)$-avoiding inversion sequences $e$ with fixed $\dist(e)$ is enumerated by OEIS sequence A069271.

The rest of the paper is organized as follows.
Section~\ref{sec:preliminaries} introduces the necessary definitions and background.
In Sections~\ref{sec:102-000-avoiding} and~~\ref{sec-refined-enumeration}, we study $(102,000)$-avoiding inversion sequences in detail.
In Section~\ref{sec:102-000-avoiding}, we enumerate the cardinality of the set of simple $H$-paths ending with a north step 
and derive a formula for the number of $(102,000)$-avoiding inversion sequences whose number of distinct elements is $m$ and whose maximum element occurs exactly once.
In Section~\ref{sec-refined-enumeration}, we consider the trivariate generating function for the number of $(102,000)$-avoiding inversion sequences $e$ with respect to their length, 
number of distinct elements, and rank, and we obtain explicit formulas for the cases with fixed $\dist(e)$ and with fixed $(\dist(e), \rank(e))$.

\section{Preliminaries}
\label{sec:preliminaries}

In this section, we provide definitions for the objects we discuss in this article.

\subsection{Inversion sequences}
An integer sequence $e=(e_1, e_2, \dots, e_n)$ is called an \emph{inversion sequence of length $n$}
if $0 \leq e_j \leq j-1$ for all $j\in [n]:=\{ 1,2,\dots,n \}$.

Pattern avoidance in inversion sequences can be defined in a similar way 
to that in permutations.
Given a word \( w \in \set{0, 1, \dots, k-1}^k\),
let the word obtained by replacing the $i$-th smallest entry in $w$ with $i-1$
be called the \emph{reduction} of $w$.
We say that an inversion sequence \( e=(e_1, e_2, \dots, e_n) \) 
\emph{contains} the pattern $w$
if there exist some indices $i_1 < i_2 < \dots < i_k$ 
such that the reduction of $e_{i_1} e_{i_2} \dots e_{i_k}$ is $w$.
Otherwise, $e$ is said to \emph{avoid} the pattern $w$.
Let $\IS_n$ denote the set of inversion sequences of length $n$ and 
$\IS_n(w_1, \dots, w_r)$ denote the set of inversion sequences of length $n$ avoiding all the patterns $w_1, \dots, w_r$.

We denote the largest 
integer in $e$ by $\max(e)$.
We also denote by \( \prmx(e) \) 
the position $p$ of the first descent, i.e.,
\begin{align*}
  e_1 \leq e_2\leq \dots \leq e_p > e_{p+1}
\end{align*}
with $e_{n+1} = -1$.
For 
$e=(e_1, e_2, \dots, e_n) \in \IS_n(102)$,
if $p \!= \prmx(e)$,
then 
$e_p = \max(e)$.
In this case, we define 
\begin{align*}
\rank(e) 
:= \prmx(e) - \max(e) - 1
= (p - 1) - e_p \geq 0.
\end{align*}
Let $\IS_{n,t}(102, w)$ denote the set of inversion sequences $e \in \IS_n (102, w)$ with $\rank (e) = t$.

\subsection{Labeled $F$-paths}

In \cite{HKSS24}, Huh et al.\ introduced \emph{F-paths} and constructed a bijection between 
$(102, 101)$-avoiding
inversion sequences and $F$-paths. 
An \emph{$F$-path of length $\ell$} is a lattice path in $\Z^2$
that starts at the origin and does not go below the line $y=x$
as a sequence of lattice points
$$\left((x_0, y_0), (x_1, y_1), \dots, (x_{\ell}, y_{\ell})\right)$$
of which every step $(x_{j} - x_{j-1}, y_{j}-y_{j-1})$, denoted by $s_j$, 
is 
in the set
$$F:=
\{(0,1)\} \cup \{(a,b) \deli a\geq 1,~b\leq 1\},
$$
for $j=1, 2, \ldots, \ell$.

In \cite{HKSS25}, Huh et al.\ extended the family of $F$-paths
by assigning a label to each step, in order to accommodate
the superset $\IS_n(102)$ of $\IS_n(102,101)$.
A \emph{labeled $F$-path} is an $F$-path 
where every 
step $(a, 1)$ is
assigned a label $(a; 1)$ and 
every other step $(a, b)$ with $b\leq 0$ is assigned a label 
$(a; b_1, \dots, b_k)$
for some nonpositive integers $b_1, \dots, b_k$ with $k \ge 1$ such that $b_1 + \cdots + b_k = b$.
We say that a step with a label $(a; b_1, \dots, b_k)$ has the \emph{semilength} $k$.
Define the \emph{semilength} of a labeled $F$-path by the sum of the semilengths of its steps.
In Figure~\ref{fig:F},
the labeled $F$-path $Q$ has exactly $20$ steps, 
but the semilength of $Q$ is equal to $26$, 
since the semilengths of two steps before the last step $(0,1)$ of $Q$ are $3$ and $5$.
Let $\LFp{n}$ be the set of labeled $F$-paths of semilength $n$.

Given a labeled $F$-path 
$Q = \left((0, 0), (x_1, y_1), \dots, (x_\ell, y_\ell)\right)$,
define the \emph{height} of $Q$
by the value $y_\ell-x_\ell$ of the last lattice point $(x_\ell, y_\ell)$ of $Q$
and denote it by $\height(Q)$.
Let $\LFp{n,t}$ be the set of $Q$ in $\LFp{n}$ with $\height(Q) = t$.

\begin{figure}[t]
\[
  \begin{tikzpicture}[scale=0.48]
    \pgfmathsetmacro{\n}{16}
    \pgfmathsetmacro{\m}{\n+1}
    \draw[gray!30, ultra thin] (0,0) grid (\n,\n);
    \draw [-stealth] (0,0) -- (\m,0);
    \draw [-stealth] (0,0) -- (0,\m);
    \draw [dashed] (0,0) -- (\n,\n);

    \coordinate (0) at (0,0);
    \coordinate (1) at (0,1);
    \coordinate (2) at (1,2);
    \coordinate (3) at (2,3);
    \coordinate (4) at (4,4);
    \coordinate (5) at (4,5);
    \coordinate (6) at (4,6);
    \coordinate (7) at (4,7);
    \coordinate (8) at (4,8);
    \coordinate (9) at (4,9);
    \coordinate (10) at (4,10);
    \coordinate (b) at (3.9,9);
    \coordinate (11) at (6,9);
    \coordinate (c1) at (4.5,12.5);
    \coordinate (c2) at (5.6,12);
    \coordinate (c3) at (5.6,11);
    \coordinate (c4) at (5.6,10.6);
    \coordinate (12) at (7,10);
    \coordinate (13) at (7,11);
    \coordinate (14) at (7,12);
    \coordinate (15) at (7,13);
    \coordinate (16) at (7,14);
    \coordinate (17) at (7,15);
    \coordinate (d) at (7.5,15.6);
    \coordinate (18) at (8,15);
    \coordinate (e) at (8.3,14.2);
    \coordinate (19) at (9,12);
    \coordinate (20) at (9,13);

    \foreach \i in {0,...,20}{
      \filldraw (\i) circle (4pt);
    }

    \foreach \i in {1,...,20}{
    \pgfmathsetmacro{\j}{\i-1}
    \draw[ultra thick] (\j) -- (\i);
    }

    \node at (0, 0) [below left] {\footnotesize $O$};
    \foreach \i in {5,10,...,17}{
      \node at (\i, 0) [below] {\footnotesize$\i$};
      \node at (0,\i) [left] {\footnotesize$\i$};
    }
    \node at (d) [] {\footnotesize$(1;0,0,0)$};
    \node at (e) [right] {\footnotesize$(1;-1, 0, 0, -1, -1)$};
  \end{tikzpicture}
\]
\caption{
  A labeled $F$-path $Q$ of semilength $26$ and height $4$,
  with the labels $(a;b)$ are omitted for brevity. 
}
\label{fig:F}
\end{figure}
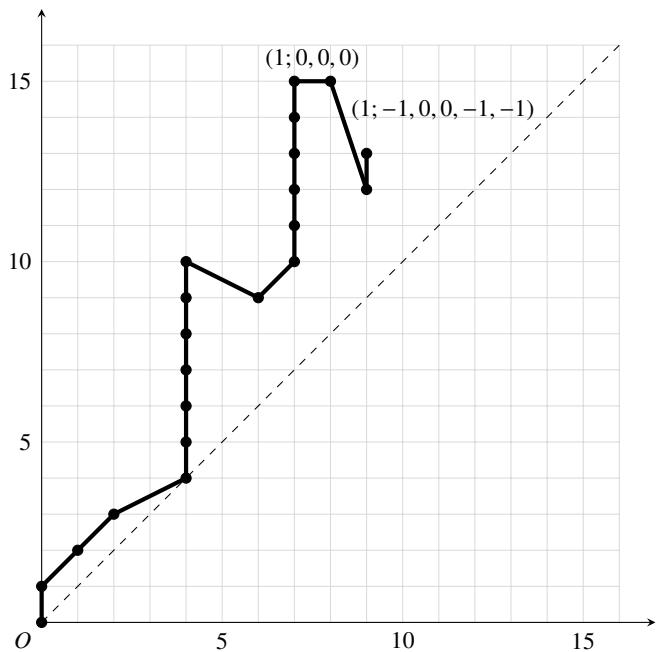

\subsection{Weighted {\it H}-walks}\label{sec:WHW}

Recall $F = \{(0,1)\} \cup \{(a,b) \deli a\geq 1,~b\leq 1\}$.
Let $H$ be the set defined by 
$$
H \coloneqq 
F\cup \{ (0,b)\deli b\leq -1\}.
$$
We define an \emph{\( H \)-walk} $R$ of length \( \ell \) to be a lattice walk 
$ R=((0,0), (x_1,y_1), \dots,  (x_{\ell},y_{\ell}))$
with steps in the set $H$,
where $R$ satisfies that
\begin{enumerate}
\item \( x_i \leq y_i\) for each \( i=1,2,\dots, \ell \) and
\item if \( s_i=(0,1)\) then \( s_{i+1}\notin \{ (0,b)\deli b\leq -1\}\) for each \( i=1,\dots, \ell-1 \). 
\end{enumerate}
For a negative integer $b$, we call a step \( (0,b) \) a \emph{south step}. 
Given an $H$-walk \( R =((0,0), (x_1,y_1), \dots,  (x_{\ell},y_{\ell})) \), the height of $R$ is defined to be
\( y_{\ell}-x_{\ell} \), which is denoted by \( \height(R) \).

Now we define weighted \( H \)-walks.
\begin{defn}
A \emph{weighted \( H \)-walk} is an \( H \)-walk, where each south step \( (0,b) \) is weighted by 
a positive integer $r$. We simply denote by \( r(0,b) \) a south step \( (0,b) \) of weight \(r \). For $r=1$, we simply write $(0,b)$ instead of $1(0,b)$.
For convention, we define the weight of a step \( (a,b) \in F \) by $1$. 
The \emph{semilength} of a weighted \( H \)-walk is defined to be the sum of the weight of steps. 
\end{defn}
For nonnegative integers $n$ and $t$, let \(\wHw{n} \) denote the set of weighted \( H \)-walks of semilength \( n \) and \(\wHw{n,t} \) denote the set of $R$ in $\wHw{n}$ with $\height(R)=t$.

Now we give a bijection between the sets $\LFp{n,t}$ and $\wHw{n,t}$.
Consider a labeled $F$-path $Q$. Suppose $Q$ has a step $s=(a,b)$ with a label $(a;b_1, b_2, \ldots, b_k)$ of semilength $k\ge 2$. From now on, we call such a step a \emph{long step}. 
For the long step $s$, let 
$$I(s):=\{i \deli 2\leq i \leq k-1,~b_i\ne0\}.$$
We substitute the long step $s$ in $Q$ with the sequence $S$ of (weighted) steps as follows:
\begin{enumerate}
\item If $I(s)= \emptyset$, then $S=(a, b_k+1)~\uline{(k-1)}(0,b_1-1)$. 
\item If $I(s)=\{{i_1},\ldots,{i_p}\}_<$ for some positive integer $p$, then 
$$S=(a,b_k+1)~\uline{(k-i_p)}(0,b_{i_p})~\uline{(i_{p}-i_{p-1})}(0, b_{i_{p-1}})
\cdots \uline{(i_2-i_1)}(0,b_{i_1})~ \uline{(i_1-1)}(0,b_1-1).$$
\end{enumerate}
By substituting all long steps $s$ in $Q$ with $S$, we get a weighted $H$-walk. 
In fact, this induces a {bijection} from $\LFp{n,t}$ to $\wHw{n,t}$. 
From now on, we refer to this bijection as $\eta:\LFp{n,t}\to\wHw{n,t}$. 

\begin{example}
Consider the labeled $F$-path $Q$ in Figure~\ref{fig:F}. There are two long steps in $Q$.
\begin{itemize}
\item The long step $(1,0)$ with the label $(1;0,0,0)$ is replaced by $(1,1)~2(0,-1)$.
\item The long step $(1,-3)$ with the label $(1;-1,0,0,-1,-1)$ is replaced by $(1,0)~(0,-1)~3(0,-2)$.
\end{itemize}
Thus the weighted $H$-walk $\eta(Q)$ is given by
$$
\eta(Q)=(0,1)~(1,1)^2~(2,1)~(0,1)^6~(2,-1)~(1,1)~(0,1)^5~(1,1)~2(0,-1)~(1,0)~(0,-1)~3(0,-2)~(0,1).
$$
\end{example}

In \cite[Theorem 3.1]{HKSS25}, for every nonnegative integers $n$ and $t$,
the map 
$$\phi : \LFp{n,t}
\to \IS_{n+1,t}(102)$$
is a bijection. 
Combining with the map $\eta$, we have the following result. 

\begin{thm}\label{thm:IS}
For every nonnegative integers $n$ and $t$,
the map
$$\phi\circ \eta^{-1} : \wHw{n,t}
\to \IS_{n+1,t}(102)$$
is a bijection. 
\end{thm}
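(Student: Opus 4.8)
The plan is to deduce the theorem from two bijectivity statements: by~\cite[Theorem~3.1]{HKSS25} the map $\phi\colon\LFp{n,t}\to\IS_{n+1,t}(102)$ is a bijection, so as soon as the substitution procedure described above is shown to be a bijection $\eta\colon\LFp{n,t}\to\wHw{n,t}$, the composite $\phi\circ\eta^{-1}\colon\wHw{n,t}\to\IS_{n+1,t}(102)$ is a bijection as well. Hence the real content is to prove that $\eta$ is a bijection, which I would do in three steps: (a) $\eta$ is well defined and preserves semilength and height; (b) write down an explicit inverse rule; (c) check that the two rules are mutually inverse.

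For (a) I would fix $Q\in\LFp{n,t}$ and track what happens to a single long step $s=(a,b)$ with label $(a;b_1,\dots,b_k)$. In both cases of the rule, $s$ is replaced by one weight-one step $(a,b_k+1)$ followed by south steps whose weights are the successive differences $k-i_p,\,i_p-i_{p-1},\,\dots,\,i_1-1$ (or simply $k-1$ when $I(s)=\emptyset$); these telescope to $k-1$, so the block has total semilength $1+(k-1)=k$ and $\eta$ preserves semilength step by step. The net horizontal displacement of the block is $a$ and its net vertical displacement is $(b_k+1)+\sum_{i\in I(s)}b_i+(b_1-1)=b_1+\dots+b_k=b$, so the block shares the endpoint of $s$; hence $\eta(Q)$ ends where $Q$ does and $\height$ is preserved. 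Since $Q$ stays weakly above the diagonal $y=x$, it remains only to control the lattice points newly created inside a block: after the step $(a,b_k+1)$ the abscissa is frozen at the block's terminal abscissa and the ordinate is nonincreasing (every remaining step is south), so the ordinate is minimized at the block's endpoint, which lies weakly above the diagonal, while $b_k\ge b_1+\dots+b_k$ forces the first new point to lie strictly above it. The weights produced are positive integers because $2\le i_1<\dots<i_p\le k-1$, and condition~(2) in the definition of an $H$-walk holds since every south step of $\eta(Q)$ is immediately preceded either by another south step or by a step $(a,b_k+1)$ with $a\ge1$, hence never by $(0,1)$. Thus $\eta(Q)\in\wHw{n,t}$.

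For (b) I would take $R\in\wHw{n,t}$ and look at its maximal runs of consecutive south steps. A run cannot begin at the origin, since a south step from a diagonal point violates $x_i\le y_i$; and by condition~(2) the step just before a run is not $(0,1)$, so, being also not a south step, it has the form $(a,b')$ with $a\ge1$. Given a run together with its preceding step $(a,b')$, I reconstruct a long step by inverting the rule: the preceding step must be the $(a,b_k+1)$ of the block, so $b_k=b'-1$; the total weight of the run equals $k-1$, which recovers $k$; the weights of the run, read from right to left, are the consecutive differences $i_1-1,\,i_2-i_1,\,\dots,\,k-i_p$, which recover the positions $i_1<\dots<i_p$ of the nonzero middle entries; the rightmost value of the run is $b_1-1$ and the remaining values are $b_{i_1},\dots,b_{i_p}$; all other entries of the label are $0$. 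Replacing every run-plus-preceding-step by the reconstructed long step and leaving every other step of $R$ as a step of semilength one produces a candidate in $\LFp{n,t}$; one then checks it is a genuine labeled $F$-path --- the decoded data satisfy $k\ge2$, each $b_i\le0$, the $b_{i_j}$ are nonzero, and $2\le i_1<\dots<i_p\le k-1$, and the underlying path stays weakly above the diagonal because its vertices form a subsequence of those of $R$ --- with semilength $n$ and height $t$, by the computations of (a).

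Step (c) then reduces to matching the two telescoping encodings, and the theorem follows. The part I expect to be the main obstacle is (b): one has to be sure that in an \emph{arbitrary} weighted $H$-walk the south steps fall, without ambiguity, into runs each attached to a unique preceding $F$-step --- this is precisely where conditions~(1) and~(2) in the definition of an $H$-walk are used --- and that the decoded data always meet the constraints making $(a;b_1,\dots,b_k)$ a legitimate label of a long step; in particular runs arising from two distinct long steps of $Q$ must never be allowed to merge, which holds because the substitution always places the non-south step $(a,b_k+1)$ in front of the run. Once this is in place, the remaining verifications are mechanical.
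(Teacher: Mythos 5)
Your proposal is correct and follows exactly the route the paper takes: the paper's entire proof of this theorem is to cite \cite[Theorem~3.1]{HKSS25} for the bijectivity of $\phi$ and to compose with the map $\eta$, whose bijectivity is asserted (without detail) when $\eta$ is introduced in Subsection~2.3. Your write-up supplies the verification of that assertion --- semilength/height preservation via the telescoping weights, the diagonal condition inside a block, and the inverse via maximal runs of south steps --- all of which is sound and consistent with what the paper leaves implicit.
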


\section{Simple $H$-paths}
\label{sec:102-000-avoiding}

In this section, we identify the weighted $H$-walks corresponding to 
\( (102,000) \)-avoiding inversion sequences and 
analyze the structure.

For each non-south step $(a,b)$ in a weighted $H$-walk $R$, we call a step $(a,b)$ 
\begin{itemize}
\item \emph{north} if $a=0$ and $b=1$,
\item \emph{up} if $a\ge1$ and $b=1$,
\item \emph{down} if $a\ge1$ and $b\le0$.
\end{itemize}
Consider weighted $H$-walks $R$ satisfying the following conditions:
\begin{itemize}
\item All south steps in $R$ are of weight $1$. 
\item A north step or south step in $R$ cannot be followed immediately by a north step.
\item A south step in $R$ cannot be followed immediately by a south step.
\end{itemize}
Note that $R\in \wHw{n}$ guarantees that a {north} step in $R$ cannot be followed immediately by a {south} step.
We refer to such weighted $H$-walks as \emph{simple $H$-paths}. 
We denote the set of simple $H$-paths $R$ of semilength \( n \) as $\Hw{n}$ and the set of $R$ in $\Hw{n}$ with $\height(R)=t$ as $\Hw{n,t}$.

\begin{lem}\label{lem:102000}
Let $\phi\circ\eta^{-1}$ be the function in Theorem~\ref{thm:IS}.
For nonnegative integers \( n \) and \( t \), we have 
$$(\phi\circ\eta^{-1})(\Hw{n,t})=\IS_{n+1,t}(102,000).$$
\end{lem}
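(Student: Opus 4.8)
The plan is to exploit Theorem~\ref{thm:IS}, which already says $\phi\circ\eta^{-1}$ is a bijection from $\wHw{n,t}$ onto $\IS_{n+1,t}(102)$. Since $\Hw{n,t}\subseteq\wHw{n,t}$ and $\IS_{n+1,t}(102,000)\subseteq\IS_{n+1,t}(102)$, the asserted identity is equivalent, by the bijectivity of $\phi\circ\eta^{-1}$, to the statement that for every $R\in\wHw{n,t}$ we have $R\in\Hw{n,t}$ if and only if $(\phi\circ\eta^{-1})(R)$ avoids $000$. An inversion sequence contains $000$ precisely when some value occurs in it at least three times, so everything reduces to describing, in terms of the structure of $R$, exactly when $(\phi\circ\eta^{-1})(R)$ has a value of multiplicity at least $3$. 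I would establish the two implications by first carrying the definition of a simple $H$-path back through $\eta$, and then reading the resulting conditions off the bijection $\phi$ of~\cite{HKSS25}.

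\emph{Transporting the conditions through $\eta$.} A direct inspection of the substitution rule defining $\eta$ shows that a long step $s$ of $Q=\eta^{-1}(R)$ with label $(a;b_1,\dots,b_k)$ is replaced by the single non-south step $(a,b_k+1)$ followed by a run of $(0,b)$-steps of total weight $k-1$; this run is a lone weight-$(k-1)$ south step when $b_2=\dots=b_{k-1}=0$, and otherwise a block of at least two consecutive south steps. Moreover north steps of $Q$ pass through $\eta$ unchanged, and no south step of $R$ is ever immediately preceded by a north step (so ``north followed by south'' cannot occur in $R$, as already remarked before the lemma). Feeding these facts into the three defining conditions of a simple $H$-path, a short case analysis shows: $R\in\Hw{n,t}$ if and only if $Q$ has no label of semilength $\ge 3$, has no two consecutive north steps, and has no semilength-$2$ step immediately followed by a north step.

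\emph{Reading $000$ off $\phi$.} It then remains to prove that $\phi(Q)$ avoids $000$ exactly when $Q$ satisfies these three conditions. For this I would unwind the construction of the bijection $\phi : \LFp{n,t}\to\IS_{n+1,t}(102)$ from~\cite[Theorem~3.1]{HKSS25}, tracking which steps of $Q$ produce which entries of $e=\phi(Q)$ and when two such entries coincide. I expect the outcome to be that a value of multiplicity $\ge 3$ is forced precisely by one of: a label of semilength $\ge 3$ (three equal entries appearing inside the block of entries it contributes); two consecutive north steps (a triple appearing together with an already-present copy of that value --- for instance the walk through $(0,0),(0,1),(0,2)$ is sent to $(0,0,0)$); or a semilength-$2$ step immediately followed by a north step (the north step producing a third copy of the value recorded by that step). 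Conversely, when none of these configurations occurs I would show that every value of $e$ is attained at most twice. Combined with the previous step and the reformulation in the first paragraph, this gives the lemma.

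\emph{Main obstacle.} The reduction via Theorem~\ref{thm:IS} and the translation through $\eta$ are routine bookkeeping; the heart of the proof is the last step, namely extracting from the definition of $\phi$ in~\cite{HKSS25} an exact account of how repeated entries are created. The delicate cases are those in which a repeated value straddles two consecutive steps of $Q$ --- above all the interaction of a long (or semilength-$2$) step with a subsequent north step --- and labels having more than one vanishing interior entry, where a repetition internal to the block and one extending past it can occur at once. An alternative, and perhaps cleaner, route would be to write down an explicit closed form for $(\phi\circ\eta^{-1})(R)$ when $R\in\Hw{n,t}$, verify directly that no value is repeated three times, and then check that dropping any one of the three conditions above introduces such a repetition.
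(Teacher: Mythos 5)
Your proposal follows essentially the same route as the paper: the three conditions you extract on $Q=\eta^{-1}(R)$ (no label of semilength $\ge 3$, no consecutive north steps, no long step followed by a north step) are exactly the paper's definition of the intermediate set $\LFp{n,t}(000)$, and like the paper you carry out the $\eta$-translation in detail while deferring the verification that $\phi$ sends this set onto $\IS_{n+1,t}(102,000)$ to an unwinding of the definition of $\phi$ in~\cite{HKSS25} (which the paper likewise only asserts is ``easy to prove''). The argument is correct and no further comparison is needed.
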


\begin{proof} 
Let $\LFp{n,t}(000)$ be the set of $Q\in \LFp{n,t}$ satisfying the following conditions:
\begin{itemize}
\item No north steps occur consecutively. 
\item The semilength of each long step is $2$.
\item Every long step is not followed by a north step.
\end{itemize}
From the definition of $\phi$ appeared in \cite[Subsection 3.1]{HKSS25},
it is easy to prove
$$\phi(\LFp{n,t}(000))=\IS_{n+1,t}(102,000).$$
By the definition of the map $\eta$, a long step $(a,b)$ with a label $(a;b_1, b_2)$ in a labeled $F$-path $Q$ is replaced by steps $(a,b_2+1)~(0,b_1-1)$ in the weighted $H$-walk $\eta(Q)$.
Thus every south step in $\eta(Q)$ has weight $1$. Therefore 
$$\eta(\LFp{n,t}(000))=\Hw{n,t},$$  
which completes the proof.
\end{proof}
Note that $R=s_1 s_2 \cdots s_n$ is a simple $H$-path of semilength $n$ if and only if 
$R$ starts at the origin, does not go below the line $y=x$, and satisfies
\begin{itemize}
\item $s_i \in H$ ($1\le i \le n$),
\item for $1 \le i \le n-1$, $s_i s_{i+1} \neq (0,b)(0,b')$, where $b, b' \in \{m: m\le 1,~m\neq0\}$.
\end{itemize}
\begin{defn}
Let us define $\ud(R)$ of $R\in \Hw{n}$ and $\dist(e)$ of $e\in \IS_{n}(102,000)$.
\begin{itemize}
\item Given a simple $H$-path $R$, let $\ud(R)$ be the sum of the number of up steps and the number of down steps in $R$. We denote  
$\Hw{n,t}^{(m)}$ the set of simple $H$-paths of semilength $n$ with $\ud(R)=m$ {and $\height(R)=t$}.
\item 
Given an inversion sequence $e = (e_1, e_2, \dots, e_n) \in \IS_n(102,000)$, let
$$\dist(e)=\dist(e_1, e_2, \ldots, e_{n}):=\left|\{e_i : 1\le i\le n\}\right|.$$
We denote {$\IS_{n}^{(m)}(102,000)$} the set of inversion sequences $e\in\IS_{n}(102,000)$ with $\dist(e)=m$,  
{$\IS_{n,t}^{(m)}(102,000)$} the set of $e\in\IS_{n}^{(m)}(102,000)$ with {$\rank(e)=t$}.
{Note that if  $e\in\IS_{n}(102,000)$ has $\dist(e)=m$ then it holds that $m\le n\le 2m$.}
\end{itemize}
\end{defn}
Again, using the bijection $\phi\circ\eta^{-1}$ in 
Theorem~\ref{thm:IS},
we have the following result.
\begin{lem}\label{lem:102000m}
Let $\phi\circ\eta^{-1}$ be the function in Theorem~\ref{thm:IS}.
For nonnegative integers \( n \), \(m \) {and $t$}, we have 
{$$(\phi\circ\eta^{-1})(\Hw{n,t}\pex{m})=\IS_{n+1,t}\pex{m+1}(102,000).$$ }
\end{lem}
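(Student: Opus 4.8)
The plan is to combine Lemma~\ref{lem:102000} with a compatibility check between the statistics $\ud$ on simple $H$-paths and $\dist$ on $(102,000)$-avoiding inversion sequences, tracking everything through the two bijections $\eta$ and $\phi$. Since Lemma~\ref{lem:102000} already gives $(\phi\circ\eta^{-1})(\Hw{n,t})=\IS_{n+1,t}(102,000)$, it suffices to show that for $R\in\Hw{n,t}$ one has $\dist((\phi\circ\eta^{-1})(R))=\ud(R)+1$; equivalently, that under the restricted maps $\eta\colon\LFp{n,t}(000)\to\Hw{n,t}$ and $\phi\colon\LFp{n,t}(000)\to\IS_{n+1,t}(102,000)$ appearing in the proof of Lemma~\ref{lem:102000}, there is a common intermediate statistic on labeled $F$-paths $Q\in\LFp{n,t}(000)$ matching both $\ud(\eta(Q))$ and $\dist(\phi(Q))-1$.

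First I would analyze how $\ud$ behaves under $\eta$. On $\LFp{n,t}(000)$ every long step $(a,b)$ carries a label $(a;b_1,b_2)$ of semilength $2$ and is replaced by the two steps $(a,b_2+1)\,(0,b_1-1)$; the first of these, $(a,b_2+1)$ with $a\ge 1$, is a non-south (up or down) step, while $(0,b_1-1)$ is a south step. Every non-long step $(a,b)$ of $Q$ (those of semilength $1$) is preserved by $\eta$: north steps $(0,1)$ stay north, and steps $(a,b)$ with $a\ge 1$ stay up or down. So the number of non-south (i.e.\ up $+$ down) steps of $\eta(Q)$ equals the number of steps $(a,b)$ of $Q$ with $a\ge 1$ — long or not — which is exactly the number of non-north steps of $Q$. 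Hence $\ud(\eta(Q))$ equals the number of non-north steps of $Q$, a statistic intrinsic to $Q$.

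Next I would recall the definition of $\phi$ from \cite[Subsection 3.1]{HKSS25} and verify that $\dist(\phi(Q))$ equals (number of non-north steps of $Q$) $+1$. Intuitively, in these $F$-path encodings the north steps are the "repeated-value" moves that do not introduce a new distinct element, while every step with positive horizontal displacement corresponds to moving to a new (strictly smaller, hence previously unseen) value, and the initial value contributes the extra $+1$; the $000$-avoidance constraint on $\LFp{n,t}(000)$ is precisely what prevents a value from being repeated three times, i.e.\ forbids two consecutive norths, matching $\dist$ behaviour. Making this precise is the main obstacle: it requires unwinding the explicit construction of $\phi$ and carefully bookkeeping which steps create new distinct values, in particular handling the long steps (semilength $2$) and the first-descent/rank normalization correctly. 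Once this identity is established, combining it with the $\eta$-computation gives $\dist(\phi(Q))=\ud(\eta(Q))+1$ for all $Q\in\LFp{n,t}(000)$, and since $\eta$ restricts to a bijection $\LFp{n,t}(000)\to\Hw{n,t}$ with image exactly $\Hw{n,t}\pex{m}$ when the number of non-north steps is $m$, and $\phi$ restricts to a bijection onto $\IS_{n+1,t}(102,000)$ with image exactly $\IS_{n+1,t}\pex{m+1}(102,000)$ in that case, the claimed equality $(\phi\circ\eta^{-1})(\Hw{n,t}\pex{m})=\IS_{n+1,t}\pex{m+1}(102,000)$ follows immediately.
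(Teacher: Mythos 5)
Your reduction is the right one, and it is essentially the route the paper takes implicitly: the paper states this lemma without proof, as a refinement of Lemma~\ref{lem:102000} obtained by tracking one extra statistic through $\phi\circ\eta^{-1}$. Your analysis of the $\eta$-side is correct and complete: in a simple $H$-path every south step is forced (by the adjacency conditions and the constraint $x_i\le y_i$) to be immediately preceded by an up or down step, so $\eta^{-1}$ pairs each south step with its predecessor into a long step of semilength $2$, all other steps are preserved, and hence $\ud(\eta(Q))$ equals the number of non-north steps of $Q\in\LFp{n,t}(000)$.

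The gap is exactly where you flag it: the identity $\dist(\phi(Q))=\#\{\text{non-north steps of }Q\}+1$ is asserted with a heuristic but never verified against the definition of $\phi$ from \cite[Subsection 3.1]{HKSS25}, and since the set-level statement is already Lemma~\ref{lem:102000}, this identity is the \emph{entire} content of the present lemma. Your heuristic is also slightly misaligned as stated: it is not that north steps are the only ``repeated-value'' moves. A cleaner bookkeeping, which you should carry out explicitly, is this: $\phi(Q)$ has length $n+1$, each step of $Q$ contributes a number of entries equal to its semilength plus the extra initial entry; in the $000$-avoiding case every value occurs at most twice, so $\dist(\phi(Q))=(n+1)-r$ where $r$ is the number of repeated values. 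One must check that each north step contributes one entry repeating the current value ($r$ increases by $1$, no new value), each long step of semilength $2$ contributes two equal entries ($1$ new value and $1$ repeat), and each remaining non-north step contributes one new value; then $r=\#\{\text{north steps}\}+\#\{\text{long steps}\}=n-m$ and $\dist(\phi(Q))=m+1$ as required. Without unwinding $\phi$ to confirm this step-by-entry correspondence, the proof is incomplete at its only substantive point.
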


We now turn to the enumeration of simple $H$-paths using generating functions.
Let $\mathcal{A}_{n}\pex{m}$ be the set of simple $H$-paths $R$ in $\Hw{n,0}\pex{m}$ satisfying that $R$ starts with the up step $(1,1)$ and ends with an up or down step.
Let $\wtd{\mathcal{D}}_{n,t}\pex{m}$ be the set of simple $H$-paths $R$ in $\Hw{n,t}\pex{m}$ satisfying that $R$ does not start with the north step. Note that 
 $\mathcal{A}_{0}\pex{m}=\mathcal{A}_n^{(0)} = \emptyset$ 
 and $\wtd{\mathcal{D}}_{0,0}\pex{0}=\{((0,0))\}$. 
Set
\begin{align*}
A=A(x,y)&:=\sum_{n\ge 1}\sum_{m\ge 0} \left|\mathcal{A}_{n}\pex{m}\right| y^m x^n,\\
\wtd{D}_t=\wtd{D}_t(x,y)&:=\sum_{n\ge 0}\sum_{m\ge 0} \left|\wtd{\mathcal{D}}_{n,t}\pex{m}\right| y^m x^n.
\end{align*}
Decomposing simple $H$-paths in $\bigcup_{m,n\ge0}\wtd{\mathcal{D}}_{n,t}\pex{m}$ induces 
$${D}_t=(xA)^t \wtd{D}_0.$$ 
To understand the generating function $\wtd{D}_0$, let us consider the last step of $R\in \bigcup_{m\ge0}\wtd{\mathcal{D}}_{n,0}\pex{m}$ with $n\ge1$.
\begin{itemize}
\item If the last step is not a south step, then this case contributes $A$.
\item 
If the last step is a south step $(0,-k)$ for a positive integer $k$, then this case contributes 
$$A\cdot \underbrace{(x A) \cdots (x A)}_k\cdot x,$$
where the first $k$ $x$'s come from $k$ north steps and the last $x$ comes from the south step $(0,-k)$.
\end{itemize}
Thus we get
\begin{equation} \label{eq:decomp000}
\wtd{D}_0= 1+A + \sum_{k\ge1} (xA)^{k+1} 
= 1+A+\frac{x^2 A^2}{1-xA}. 
\end{equation}
Given a simple $H$-path $R\in\wtd{\mathcal{D}}_{n,t}\pex{m}$, $R(a,b)$ is an $H$-path in $\mathcal{A}_{n+1}\pex{m+1}$ if and only if 
$a-b=t$ and $1\le a\le t+1$. So we get
\begin{equation} 
{A}= \sum_{t\ge0} \wtd{D}_t \cdot (t+1)xy 
= xy\wtd{D}_0 \sum_{t\ge0}(t+1) (xA)^t 
= \frac{xy\wtd{D}_0}{(1-xA)^2}.  \label{eq:dcp000a}
\end{equation}
Substituting the equation~\eqref{eq:decomp000} for \(\wtd{D}_0 \)
into the identity in~\eqref{eq:dcp000a}, we obtain the following equation for \( A \):
\begin{equation}\label{eq:A0000}
A=\frac{xy}{(1-xA)^2}\left(1+A+\frac{x^2 A^2}{1-xA}\right).
\end{equation}
Set $B=B(x,y):=\frac{1}{1-xA}$, which is equivalent to $A=\frac{B-1}{xB}$.
Substituting $\frac{B-1}{xB}$ for \({A} \) into the equation~\eqref{eq:A0000}, we get
\begin{equation}\label{eq:B0000}
B=1+xyB^2\left(xB^2-(x-1)(B-1)\right).
\end{equation} 
We can obtain a closed form of each coefficient of $B(x,y)$ as follows. 
\begin{lem}\label{lem:form-bnm}
For nonnegative integers $n$ and $m$, let $b_{n}^{(m)}:=[x^n y^m] B(x,y)$. Then $b_0^{(m)}=\delta_{0,m}$ and
\begin{equation*} \label{eq:form-bnm}
b_{n}^{(m)}= \dfrac{1}{m}{m\choose n-m}
\sum_{j=0}^{n-m} {n-m\choose j}{n+m-j\choose 2m+j+1} \quad(n\ge 1).
\end{equation*}
\end{lem}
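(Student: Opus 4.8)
The plan is to massage equation~\eqref{eq:B0000} into a shape to which Lagrange inversion applies, with $x$ carried along merely as a parameter. First I would set $P = P(x,y) := B(x,y) - 1$, a power series with zero constant term, and rewrite~\eqref{eq:B0000}, using $-(x-1)=1-x$, as the fixed-point equation
$$P = xy\,\psi(P), \qquad \psi(P) := (1+P)^2\bigl(x(1+P)^2 + (1-x)P\bigr),$$
where $\psi(P)\in\mathbb{Q}[x][P]$. Iterating this equation shows that $P$ is a power series in the single variable $t := xy$ whose coefficients are polynomials in $x$; consequently $[y^m]P = x^m\,[t^m]P$, and hence $b_n^{(m)} = [x^{\,n-m}]\,[t^m]\,P$ for all $n,m\ge 1$, while the degenerate case $b_0^{(m)}=\delta_{0,m}$ is immediate from $B(0,0)=1$.

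Next I would invoke the Lagrange inversion formula for $P = t\,\psi(P)$ over the $\mathbb{Q}$-algebra $\mathbb{Q}[[x]]$ (here one needs no invertibility of $\psi(0)=x$, only that the ground ring contains $\mathbb{Q}$), which gives
$$[t^m]P \;=\; \frac{1}{m}\,[P^{m-1}]\,\psi(P)^m \;=\; \frac{1}{m}\,[P^{m-1}]\Bigl((1+P)^{2m}\bigl(x(1+P)^2+(1-x)P\bigr)^m\Bigr).$$
The algebraic observation that makes the subsequent $x$-extraction clean is the identity $x(1+P)^2+(1-x)P = P + x(1+P+P^2)$; writing $k := n-m$, this yields $[x^{k}]\bigl(x(1+P)^2+(1-x)P\bigr)^m = \binom{m}{k}P^{m-k}(1+P+P^2)^{k}$, and therefore
$$b_n^{(m)} \;=\; \frac{1}{m}\binom{m}{k}\,[P^{m-1}]\Bigl((1+P)^{2m}P^{m-k}(1+P+P^2)^{k}\Bigr) \;=\; \frac{1}{m}\binom{m}{k}\,[P^{k-1}]\Bigl((1+P)^{2m}(1+P+P^2)^{k}\Bigr).$$

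Finally I would evaluate this single remaining coefficient by expanding $(1+P+P^2)^k = \bigl((1+P)+P^2\bigr)^k = \sum_{j=0}^{k}\binom{k}{j}(1+P)^{k-j}P^{2j}$ and extracting $[P^{k-1}]$ term by term, which produces $\sum_{j=0}^{k}\binom{k}{j}\binom{2m+k-j}{k-1-2j} = \sum_{j=0}^{k}\binom{k}{j}\binom{2m+k-j}{2m+j+1}$; resubstituting $k=n-m$ and $2m+k-j=n+m-j$ gives exactly the stated formula. I expect the only genuinely delicate point to be the first step — recognizing that $t=xy$, not $y$ alone, is the correct variable for Lagrange inversion, and verifying carefully that $P$ is a series in $t$ with polynomial-in-$x$ coefficients so that the nested coefficient extraction $[x^{n-m}][t^m]$ is legitimate; everything after that is routine binomial bookkeeping.
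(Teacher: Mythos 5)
Your proof is correct. Both you and the paper prove the formula by applying Lagrange inversion to the functional equation \eqref{eq:B0000}, so the main tool is the same; the genuine difference lies in how the $x$-coefficient is then extracted, and your route is the cleaner of the two. The paper keeps $x$ inside the kernel, writing $B=1+yP(B)$ with $P(u)=xu^2\left(xu^2-(x-1)(u-1)\right)$, inverts in $y$ alone, and then expands $(xu^2+(x+1)u+x)^m$ into a triple sum over $i,j$ and the binomial expansion of $(x+1)^{i-2j}$, which it must subsequently collapse via a Vandermonde-type convolution to reach the stated single sum. You instead pass to $t=xy$ (after checking that $P=B-1$ lies in $\mathbb{Q}[x][[t]]$, which is the one point requiring care and which you handle correctly via the recursive determination of coefficients), and exploit the identity $x(1+P)^2+(1-x)P=P+x\left(1+P+P^2\right)$ to peel off the power of $x$ in a single binomial step; the remaining expansion $\left(1+P+P^2\right)^k=\sum_j\binom{k}{j}(1+P)^{k-j}P^{2j}$ then lands directly on the paper's formula with no convolution identity needed. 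Your parenthetical remark that Lagrange inversion requires no invertibility of $\psi(0)$ is apt --- the paper's own application (where $P(1)=x^2$ is not a unit in $\mathbb{Q}[[x]]$) relies on the same fact. The boundary cases are also consistent: for $k=n-m=0$ your $[P^{-1}]$ and the vanishing binomial $\binom{2m}{2m+1}$ both give $0$, and for $k>m$ the factor $\binom{m}{k}=0$ disposes of the formally negative power of $P$. In short, same theorem, same inversion, but your bookkeeping buys a shorter and more transparent derivation, while the paper's buys nothing extra beyond following the most literal expansion.
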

\begin{proof}
Set $P(u):=xu^{2}\left(xu^2-(x-1)(u-1)\right)$, then \eqref{eq:B0000} changes to
\begin{equation*}
B(x,y)=1+yP\left(B(x,y)\right). 
\end{equation*}
Applying the Lagrange inversion formula with respect to the variable $y$, 
in the form given by Deutsch~\cite{Deutsch99}, we obtain that
\begin{equation} \label{eq:eqn-LagranG}
\left[y^m\right]B(x,y)=\dfrac{1}{m}\left[u^{m-1}\right]P\left(1+u\right)^m, 
\end{equation}
for every positive integer $m$. Since
\begin{equation*}
P(1+u)^m = x^m(u+1)^{2m}\left( xu^2+(x+1)u+x\right)^m,
\end{equation*}
we have
\begin{align*}
\dfrac{1}{m}\left[u^{m-1}\right]P(1+u)^m 
&=\dfrac{x^m}{m}\sum_{i=0}^{m-1} {2m\choose m-1-i}\left[u^{i}\right]\left( xu^2+(x+1)u+x\right)^m\\
&=\dfrac{x^m}{m}\sum_{i=0}^{m-1} {2m\choose m-1-i}
\sum_{j=0}^m{m\choose j}x^j \left[u^{i-2j}\right]\left((x+1)u+x\right)^{m-j}.
\end{align*}
By \eqref{eq:eqn-LagranG} we have
\begin{equation*}
\left[y^m\right]B(x,y)=\dfrac{x^m}{m}\sum_{i=0}^{m-1} {2m\choose m-1-i}
\sum_{j=0}^m{m\choose j}x^j {m-j\choose i-2j}(x+1)^{i-2j}x^{m-i+j}.
\end{equation*}
Thus, for $n\ge 1$, we can deduce that
\begin{align}
[x^n y^m]B(x,y)
&=\left[x^n\right]\dfrac{x^m}{m}\sum_{i=0}^{m-1} {2m\choose m-1-i}\sum_{j=0}^m{m\choose j}x^j {m-j\choose i-2j}(x+1)^{i-2j}x^{m-i+j} \notag\\
&=\dfrac{1}{m}\sum_{i=0}^{m-1} {2m\choose m-1-i}\sum_{j=0}^m{m\choose j}{m-j\choose i-2j}\left[x^{n-2m+i-2j}\right](x+1)^{i-2j} \notag\\
&=\dfrac{1}{m}\sum_{i=0}^{m-1} {2m\choose m-1-i}
\sum_{j=0}^m{m\choose j}{m-j\choose i-2j}{i-2j\choose 2m-n}\notag\\
&=\dfrac{1}{m}\sum_{j=0}^{m} {m\choose j}{m-j\choose 2m-n}
\sum_{i=0}^{m-1}{2m\choose m-1-i}{n-m-j\choose m+j-i}\notag\\
&=\dfrac{1}{m}{m\choose 2m-n}
\sum_{j=0}^{n-m} {n-m\choose j}
\sum_{i=0}^{m-1}{2m\choose m+1+i}{n-m-j\choose m+j-i}\notag\\
&=\dfrac{1}{m}{m\choose n-m}
\sum_{j=0}^{n-m} {n-m\choose j}{n+m-j\choose 2m+j+1}, \notag
\end{align}
which completes the proof.
\end{proof}
Since $B(x,y)=\sum_{t \ge 0} (A(x,y) x)^t$, for $n\ge1$, $b_n\pex{m}$ is the cardinality of the set of 
{simple $H$-paths $R$ in $\bigcup_{t\geq0}\wtd{\mathcal{D}}_{n,t}\pex{m}$ satisfying that $R$ ends with a north step}. 
We denote this set by $\mathcal{B}_n\pex{m}$.

Let $b(y):=B(1,y)$.
Substituting $x$ with \({1} \) into~\eqref{eq:B0000}, we get
$b(y)=1+y\,b(y)^4$.
Thus
\begin{equation*}\label{eq:sum_bn}
\sum_{n=m}^{2m} b_n^{(m)} = [y^m]b(y)=\frac{1}{3m+1}\binom{4m}{m}.
\end{equation*}  
Applying the bijection $\phi\circ\eta^{-1}$ in Lemma~\ref{lem:102000} on $\bigcup_{n= m}^{2m}\mathcal{B}_n\pex{m}$, we get 
the following result: 
\begin{thm}
\label{cor:102000-bdistm} 
For \(m \geq 0\), the cardinality of $\bigcup_{n=m}^{2m}\mathcal{B}_n\pex{m}$, i.e., the number of $(102,000)$-inversion sequences $e$ with $\dist(e)=m$ and $|\{p:e_p=\max(e)|=1$ is given by the $3$-Fuss-Catalan number~\cite[A002293]{OEIS}
\begin{equation*}
\frac{1}{3m+1}\binom{4m}{m}.
\end{equation*}
\end{thm}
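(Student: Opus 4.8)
The plan is to combine the generating-function identity already set up in the paragraph before the statement with a transfer along the bijection $\phi\circ\eta^{-1}$. For the enumeration, recall $\lvert\mathcal{B}_n^{(m)}\rvert=b_n^{(m)}=[x^n y^m]B(x,y)$ for $n\ge1$; by Lemma~\ref{lem:form-bnm}, $b_n^{(m)}$ vanishes unless $m\le n\le 2m$, so $\sum_{n=m}^{2m}\lvert\mathcal{B}_n^{(m)}\rvert=[y^m]B(1,y)$. Setting $x=1$ in~\eqref{eq:B0000} collapses the defining equation of $B$ to $b(y):=B(1,y)=1+y\,b(y)^4$, which is the functional equation of the $3$-Fuss-Catalan generating function; Lagrange inversion in the Deutsch form used in Lemma~\ref{lem:form-bnm}, now with the polynomial $P$ there replaced by $u\mapsto u^{4}$, gives
\[
[y^m]b(y)=\frac1m\,[u^{m-1}](1+u)^{4m}=\frac1m\binom{4m}{m-1}=\frac{1}{3m+1}\binom{4m}{m}.
\]
So it remains to identify, via $\phi\circ\eta^{-1}$, the set $\bigcup_{n=m}^{2m}\mathcal{B}_n^{(m)}$ of simple $H$-paths ending with a north step with the class of $(102,000)$-avoiding inversion sequences counted in the statement, namely those whose maximum element is attained exactly once.

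For this I would trace the final step of a simple $H$-path through the two bijections. First, $\eta$ modifies only the long steps of a labeled $F$-path, and in the $(000)$-restricted class $\LFp{n,t}(000)$ every long step $(a,b)$ carries a label $(a;b_1,b_2)$ of semilength $2$ and is replaced by the pair $(a,b_2+1)\,(0,b_1-1)$, where $(a,b_2+1)$ is an up or a down step and $(0,b_1-1)$ is a south step; thus a step replacement never yields a north step. Hence a simple $H$-path $R$ ends with a north step if and only if $\eta^{-1}(R)\in\LFp{n,t}(000)$ ends with the step $(0,1)$, and the problem reduces to showing that, under the bijection $\phi:\LFp{n,t}\to\IS_{n+1,t}(102)$ of~\cite{HKSS25}, a labeled $F$-path terminating with $(0,1)$ corresponds to an inversion sequence whose maximum occurs exactly once. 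I would verify this from the explicit description of $\phi$ in~\cite[Subsection~3.1]{HKSS25}: adjoining a terminal $(0,1)$ amounts on the inversion-sequence side to introducing a fresh strict maximum that is not repeated thereafter, whereas ending with an up or a down step forces the maximum to be matched---either $e_{\prmx(e)-1}=\max(e)$, or $\max(e)$ recurs at a position beyond $\prmx(e)$---both possibilities being permitted under $(102)$- and $(000)$-avoidance. Once this is in hand, Lemma~\ref{lem:102000} transfers the displayed count and finishes the proof.

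I expect the last equivalence to be the main obstacle: it requires unwinding the construction of $\phi$ from~\cite{HKSS25} carefully enough to see that ``terminates with $(0,1)$'' on the $F$-path side is exactly ``the maximum is attained once'' on the inversion-sequence side, and to keep track of the number of distinct elements across both $\eta^{-1}$ and $\phi$ over the whole range $m\le n\le 2m$. The generating-function half is, by contrast, already essentially in place; an alternative to the $x=1$ shortcut would be to sum the closed form of Lemma~\ref{lem:form-bnm} over $n$ via a Vandermonde-type collapse, but that path is longer and not needed.
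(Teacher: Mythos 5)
Your generating-function half is correct and is exactly the paper's argument: $b_n^{(m)}$ vanishes outside $m\le n\le 2m$ by Lemma~\ref{lem:form-bnm}, setting $x=1$ in \eqref{eq:B0000} gives $b(y)=1+y\,b(y)^4$, and Lagrange inversion yields $[y^m]b(y)=\frac{1}{m}\binom{4m}{m-1}=\frac{1}{3m+1}\binom{4m}{m}$. Your observation that $\eta$ never creates a north step when expanding a long step (the replacement is an up/down step followed by a south step), so that ``$R$ ends with a north step'' is equivalent to ``$\eta^{-1}(R)$ ends with $(0,1)$'', is also correct.

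The gap is in the transfer step, and it is more serious than the bookkeeping concern you flag at the end: the set-level identification you propose to prove cannot hold as stated. By Lemma~\ref{lem:102000m}, $\phi\circ\eta^{-1}$ sends $\mathcal{H}_{n,t}^{(m)}$ onto $\IS_{n+1,t}^{(m+1)}(102,000)$, so the image of $\bigcup_{n}\mathcal{B}_n^{(m)}$ consists of inversion sequences with $m+1$ distinct elements, not $m$. Concretely, for $m=1$ the only element of $\bigcup_{n}\mathcal{B}_n^{(1)}$ is the path $(1,1)(0,1)$, whose image is a length-$3$ sequence with \emph{two} distinct values, whereas the target set $\{e:\dist(e)=1,\ \max \text{ attained once}\}$ is $\{(0)\}$; the two families are equinumerous but are not matched by $\phi\circ\eta^{-1}$. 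So no amount of careful unwinding of $\phi$ from~\cite{HKSS25} will show that ``terminates with $(0,1)$'' corresponds to ``$\dist(e)=m$ and the maximum occurs once''---an additional link between two equinumerous sets is required. The paper supplies such a link only later, in the Remark following Theorem~\ref{cor:102000-distm_rankt}: the explicit bijection between $\{e:\dist(e)=m,\ \rank(e)=0\}$ and $\{e:\dist(e)=m,\ \max\text{ once}\}$, combined with the fact that $E(1,y,0)$ satisfies the same equation $G_0=1+yG_0^4$ as $b(y)$. To be fair, the paper's own one-line justification (``applying the bijection $\phi\circ\eta^{-1}$'') is equally silent on this point, but your plan commits to a specific false statement about the image of $\mathcal{B}_n^{(m)}$; you should reroute the combinatorial half through the rank-$0$ sequences (or count the max-once sequences independently) rather than through that image.
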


\section{Refined enumeration of $(102,000)$-avoiding inversion sequences}
\label{sec-refined-enumeration}
In this section, 
we enumerate the number of $(102,000)$-avoiding inversion sequences $e$ with respect to their length, 
number of distinct elements, and rank.
For convention, the set $\IS_{0}(102,000)$ is a singleton containing the empty sequence $\varepsilon$ with $\dist(\varepsilon)=\rank(\varepsilon)=0$. 

Let
\begin{align*}
H(x,y,z)&:=\sum_{n\ge 0}\sum_{m\ge0}\sum_{t\ge0} \left|\Hw{n,t}\pex{m}\right| z^t y^m x^n,
\\
\wtd{D}(x,y,z)&:=\sum_{n\ge 0}\sum_{m\ge0}\sum_{t\ge0} \left|\wtd{\mathcal{D}}_{n,t}\pex{m}\right| z^t y^m x^n,\\
E(x,y,z)&:=\sum_{n\ge 0}\sum_{m\ge0}\sum_{t\ge0} \left|\IS_{n,t}\pex{m}(102,000) \right| z^t y^m x^n.
\end{align*}
Decomposing simple $H$-paths induces
\begin{equation} \label{eq:wtdA-wtdA0}
\wtd{D}(x,y,z)=\sum_{t\ge0} \wtd{D}_t \,z^t = \frac{\wtd{D}_0}{1-xzA}. 
\end{equation}
Since a nonempty simple $H$-path of semilength $n$ starting with the up step $(1,1)$ corresponds to a simple $H$-path of semilength $n-1$, we have
\begin{equation*} \label{eq:wtdA-H}
\wtd{D}(x,y,z)=1+xyH(x,y,z).
\end{equation*}
Since $E(x,y,z) = 1 + xy H(x,y,z)$ by Lemma~\ref{lem:102000m}, we have 
$$E(x,y,z)=D(x,y,z).$$
From this equation, {$E(x,y,z)$} can be expressed in terms of $B=B(x,y)$ as follows: 
\begin{align}
\hspace{40mm} E(x,y,z)&={D}(x,y,z) \notag\\
&=\frac{\wtd{D}_0}{1-xzA} \hspace{60mm}\text{(by~\eqref{eq:wtdA-wtdA0})} \notag\\
&=\frac{A(1-xA)^2}{xy(1-xzA)} \hspace{54mm}\text{(by~\eqref{eq:dcp000a})}\notag\\
&=\frac{B-1}{x^2yB^2(B-z(B-1))} \hspace{36mm}\text{(by~$A=\textstyle\frac{B-1}{xB}$)}\notag\\
&=\frac{xB^2-(x-1)(B-1)}{xB-xz(B-1)}.  \hspace{40mm}\text{(by~\eqref{eq:B0000})} \label{eq:H-B}
\end{align}
From \eqref{eq:H-B} with $z=1$, 
i.e.,
\begin{equation}\label{eq:E-B}
E(x,y,1) = x^{-1}(B-1)(xB+1)+1,
\end{equation}
we arrive at the following enumerative result:
\begin{thm}\label{prop:hxy}
For positive integers \(n \) and $m$, the number of 
\((102,000)\)-avoiding inversion sequences \(e\) of length \( n \) with $\dist(e)=m$ is given by 
\begin{equation}\label{eq:102000-reform}
\left|\IS_n\pex{m}(102,000)\right|=b_{n+1}^{(m)}+\sum_{k=1}^{n}\sum_{\ell=0}^{m} b_k^{(\ell)} b_{n-k}^{(m-\ell)},
\end{equation}
where $b_n^{(m)}$ is given in Lemma~\ref{lem:form-bnm}.
\end{thm}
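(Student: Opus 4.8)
The plan is to obtain $\left|\IS_n\pex{m}(102,000)\right|$ as a coefficient of the generating function $E(x,y,1)$. Since $\IS_n\pex{m}(102,000)$ is the disjoint union of the sets $\IS_{n,t}\pex{m}(102,000)$ over $t\ge0$, the definition of $E(x,y,z)$ gives
\begin{equation*}
\left|\IS_n\pex{m}(102,000)\right| = \sum_{t\ge0}\left|\IS_{n,t}\pex{m}(102,000)\right| = [x^n y^m]\,E(x,y,1),
\end{equation*}
so everything reduces to reading off the coefficients on the right-hand side of~\eqref{eq:E-B}.

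First I would put~\eqref{eq:E-B} into a shape suited to coefficient extraction. Expanding the product,
\begin{align*}
E(x,y,1) &= x^{-1}(B-1)(xB+1)+1 \\
&= B^2 - B + x^{-1}(B-1) + 1,
\end{align*}
where $B = B(x,y) = \sum_{n\ge0}\sum_{m\ge0} b_n\pex{m} x^n y^m$. By Lemma~\ref{lem:form-bnm} we have $b_0\pex{m} = \delta_{0,m}$, so $B-1$ has no term of $x$-degree $0$; hence $x^{-1}(B-1) = \sum_{n\ge0}\sum_{m\ge0} b_{n+1}\pex{m} x^n y^m$ is a genuine power series.

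Then I would extract $[x^n y^m]$ from each summand, for $n,m\ge1$. The term $x^{-1}(B-1)$ contributes $b_{n+1}\pex{m}$; the term $B^2$ contributes the Cauchy product $\sum_{k=0}^{n}\sum_{\ell=0}^{m} b_k\pex{\ell} b_{n-k}\pex{m-\ell}$; the term $-B+1$ contributes $-b_n\pex{m}$, since $[x^n y^m]\,1 = 0$ for $n\ge1$. Finally, using $b_0\pex{\ell} = \delta_{0,\ell}$ once more, the $k=0$ summand of the Cauchy product equals $\sum_{\ell=0}^m b_0\pex{\ell} b_n\pex{m-\ell} = b_n\pex{m}$, which cancels the $-b_n\pex{m}$ coming from $-B$. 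What survives is precisely
\begin{equation*}
[x^n y^m]\,E(x,y,1) = b_{n+1}\pex{m} + \sum_{k=1}^{n}\sum_{\ell=0}^{m} b_k\pex{\ell} b_{n-k}\pex{m-\ell},
\end{equation*}
which is~\eqref{eq:102000-reform}.

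This argument is elementary, and there is no serious obstacle beyond bookkeeping. The two points that warrant a little care are the observation that $x^{-1}(B-1)$ is a legitimate formal power series — which is exactly the content of $b_0\pex{m}=\delta_{0,m}$, i.e.\ that $B$ has constant term $1$ and no other term of $x$-degree $0$ — and the cancellation of the stray $\pm b_n\pex{m}$ terms, which is what turns the natural Cauchy sum $\sum_{k=0}^{n}$ into the sum $\sum_{k=1}^{n}$ appearing in the statement.
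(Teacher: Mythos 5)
Your proof is correct and takes essentially the same route as the paper: the paper derives \eqref{eq:E-B} and then states the theorem with the coefficient extraction left implicit, which is exactly the bookkeeping you carry out (including the cancellation of the $k=0$ Cauchy term against $-b_n\pex{m}$ via $b_0\pex{\ell}=\delta_{0,\ell}$).
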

\begin{table}[t]
\centering
\begin{tabular}{c|cccccccccccccccccccccccccc}
\noalign{\smallskip}\noalign{\smallskip}
\(n \backslash m\)&& 1 && 2 && 3 && 4 && 5 && 6 && 7 && 8 \\ 
\hline
1 && 1 && 0 && 0 && 0 && 0 && 0 && 0 && 0 \\ 
2 && 1 && 1 && 0 && 0 && 0 && 0 && 0 && 0 \\ 
3 && 0 && 4 && 1 && 0 && 0 && 0 && 0 && 0 \\ 
4 && 0 && 4 && 9 && 1 && 0 && 0 && 0 && 0 \\ 
5 && 0 && 0 && 23 && 16 && 1 && 0  && 0 && 0 \\ 
6 && 0 && 0 && 19 && 76 && 25 && 1 && 0 && 0 \\ 
7 && 0 && 0 && 0 && 146 && 190 && 36 && 1 && 0 \\ 
8 && 0 && 0 && 0 && 101 && 630 && 400 && 49 && 1 \\ 
9 && 0 && 0 && 0 && 0 && 972 && 2010 && 749 && 64 \\ 
10 && 0 && 0 && 0 && 0 && 576 && 5160 && 5285 && 1288 \\ 
11 && 0 && 0 && 0 && 0 && 0 && 6658 && 19943 && 12124 \\ 
12 && 0 && 0 && 0 && 0 && 0 && 3445 && 41895 && 62650 \\ 
13 && 0 && 0 && 0 && 0 && 0 && 0 && 46475 && 189784 \\ 
14 && 0 && 0 && 0 && 0 && 0 && 0 && 21323 && 337876 \\ 
15 && 0 && 0 && 0 && 0 && 0 && 0 && 0 && 328786 \\ 
16 && 0 && 0 && 0 && 0 && 0 && 0 && 0 && 135439 \\ 
\hline
$[y^m] g(y)$ && 2 && 9 && 52 && 340 && 2394 && 17710 && 135720 && 1068012 \\ 
\end{tabular}
\linebreak
\caption{The number of \((102,000)\)-avoiding inversion sequences $e$ of length \( n \) with $\dist(e)=m$. (Theorem~\ref{prop:hxy})}\label{tab:ISmn}
\end{table}
Table~\ref{tab:ISmn} shows some numbers $\left|\IS_n\pex{m}(102,000)\right|$.
From \eqref{eq:B0000} and \eqref{eq:E-B}, we can deduce 
a minimal polynomial
for $E(x,y,1)$.
\begin{prop} \label{prop:H-ftneq}
The generating function 
$$E=E(x,y,1)=\sum_{n\ge 0}\sum_{m\ge0}
\left|\IS_{n}\pex{m}(102,000) \right| y^m x^n$$ 
is algebraic with minimal polynomial
\begin{equation}\label{eq:E0000}
1- \left(1-2yx(1-x)\right)E + yx\left(-1+3x+yx(1-x)^2\right)E^2  + 2y^2 x^3(1-x)E^3+y^2 x^4 E^4.
\end{equation}
\end{prop}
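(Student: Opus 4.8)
The plan is to eliminate the auxiliary series $B=B(x,y)$ between the two polynomial identities already at hand. Expanding~\eqref{eq:B0000} gives the relation
\begin{equation*}
P_1(B) := x^3 y\,B^4 - x^2(x-1)y\,B^3 + x^2(x-1)y\,B^2 - B + 1 = 0
\end{equation*}
in $\mathbb{Q}[x,y][B]$, and clearing the factor $x^{-1}$ in~\eqref{eq:E-B} gives
\begin{equation*}
P_2(B,E) := x\,B^2 + (1-x)\,B - (xE - x + 1) = 0
\end{equation*}
in $\mathbb{Q}[x,y,E][B]$; both are satisfied by $B(x,y)$ and by $E=E(x,y,1)$. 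I would then compute the resultant $\mathrm{Res}_B(P_1,P_2)$, a polynomial in $E$ with coefficients in $\mathbb{Q}[x,y]$, and identify it with~\eqref{eq:E0000} up to a monomial. Writing $g(B):=B^2+(x^{-1}-1)B+(1-x^{-1})$, one has the tidy identity $P_2(\beta,E)=x\bigl(g(\beta)-E\bigr)$, so that, with $\beta_1,\dots,\beta_4$ the roots of $P_1$ over $\overline{\mathbb{Q}(x,y)}$ and $x^3y$ the $B$-leading coefficient of $P_1$, the product-over-roots formula yields
\begin{equation*}
\mathrm{Res}_B(P_1,P_2) = (x^3y)^2\prod_{i=1}^4 P_2(\beta_i,E) = x^{10}y^2\prod_{i=1}^4\bigl(E-g(\beta_i)\bigr).
\end{equation*}
The last product is a monic degree-$4$ polynomial in $E$ vanishing at $E(x,y,1)=g(\beta_1)$; dividing the resultant by $x^6$ removes the spurious monomial and leaves a degree-$4$ polynomial with $E$-leading coefficient $x^4y^2$, which a direct computation confirms is exactly~\eqref{eq:E0000}. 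In particular $E$ is algebraic over $\mathbb{Q}(x,y)$ and is annihilated by~\eqref{eq:E0000}.

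To see that~\eqref{eq:E0000} is the \emph{minimal} polynomial, I would show $[\mathbb{Q}(x,y)(E):\mathbb{Q}(x,y)]=4$. Solving~\eqref{eq:B0000} for $y$ exhibits $y = (B-1)\big/\bigl(x^2B^2(xB^2-(x-1)B+(x-1))\bigr)$ as a rational function of $B$ over $\mathbb{Q}(x)$ in lowest terms whose denominator has $B$-degree $4$; hence $[\mathbb{Q}(x)(B):\mathbb{Q}(x)(y)]=4$, so $B$ has degree $4$ over $\mathbb{Q}(x,y)$ and $P_1$ is, up to a scalar, its minimal polynomial. Since $E=g(B)\in\mathbb{Q}(x,y)(B)$, the degree $[\mathbb{Q}(x,y)(E):\mathbb{Q}(x,y)]$ divides $4$, so it is $1$, $2$, or $4$. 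It is not $1$: otherwise $P_2(B,E)=0$ would be a quadratic relation for $B$ over $\mathbb{Q}(x,y)$, contradicting $\deg B=4$. It is not $2$ either: in that case $\mathbb{Q}(x,y)(E)$ is an intermediate field and $P_2(\,\cdot\,,E)$, being degree two with root $B$, must be the minimal polynomial of $B$ over it; the four roots of $P_1$ then split into two Galois-conjugate pairs, each pair being the root set of $P_2(\,\cdot\,,E'')$ for a conjugate $E''$ of $E$ and hence summing to $-(1-x)/x$, so that $\beta_1+\dots+\beta_4=2(x-1)/x$; but Vieta applied to $P_1$ gives $\beta_1+\dots+\beta_4=(x-1)/x$, a contradiction. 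Therefore $\deg_{\mathbb{Q}(x,y)}E=4$, and since~\eqref{eq:E0000} is a degree-$4$ polynomial annihilating $E$ with coefficient gcd $1$ (its constant term is $1$), it is irreducible over $\mathbb{Q}(x,y)$, i.e.\ minimal.

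The step demanding genuine care is the resultant computation together with the bookkeeping of the monomial prefactor: one must verify that $x^{-6}\,\mathrm{Res}_B(P_1,P_2)$ really is a polynomial and agrees coefficient by coefficient with~\eqref{eq:E0000}. This is a finite, if somewhat bulky, calculation; a lighter cross-check is to expand the series $E = 1 + (x+x^2)y+\cdots$ and confirm that it annihilates~\eqref{eq:E0000} to sufficiently high order, consistently with Table~\ref{tab:ISmn}. Everything else reduces to elementary field theory and Vieta's formulas applied to the already-established identities~\eqref{eq:B0000} and~\eqref{eq:E-B}.
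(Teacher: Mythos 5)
Your overall strategy---eliminating $B$ between \eqref{eq:B0000} and \eqref{eq:E-B}---is the same as the paper's, but the execution differs: the paper solves the pair by hand, first deriving $xB^2=(x-1)(B-1)+xE$ from \eqref{eq:E-B}, then substituting into \eqref{eq:B0000} to get $B=1+\frac{x^2yE^2}{1-x(x-1)yE}$, and finally clearing denominators; you instead take a resultant. More importantly, you supply something the paper does not: an actual proof of \emph{minimality} (the paper's proof only shows that \eqref{eq:E0000} annihilates $E$ and never argues irreducibility). Your field-theoretic argument for that part is sound: the lowest-terms degree count shows $[\mathbb{Q}(x,y)(B):\mathbb{Q}(x,y)]=4$, the tower forces $\deg E\in\{2,4\}$, and the Vieta comparison ($2(x-1)/x$ versus $(x-1)/x$ for the sum of the roots of $P_1$) correctly rules out degree $2$. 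This is genuine added value.

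There is, however, a concrete arithmetic error that would derail your central computation as written: your $P_1$ is not the expansion of \eqref{eq:B0000}. Expanding $B=1+xyB^2\left(xB^2-(x-1)(B-1)\right)$ gives
\begin{equation*}
x^{2}y\,B^4-x(x-1)y\,B^3+x(x-1)y\,B^2-B+1=0,
\end{equation*}
whereas your $P_1$ carries an extra factor of $x$ on the three leading terms but not on $-B+1$, so it is not proportional to the true relation and is not satisfied by $B(x,y)$. Consequently $\mathrm{Res}_B(P_1,P_2)$ with your $P_1$ is \emph{not} a monomial multiple of \eqref{eq:E0000}, and the ``direct computation'' you defer would not confirm your claim. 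The same stray $x$ appears in your formula $y=(B-1)/\bigl(x^2B^2(xB^2-(x-1)B+(x-1))\bigr)$, though there it is harmless for the degree count. With the corrected $P_1$ the leading coefficient is $x^2y$, so $\mathrm{Res}_B(P_1,P_2)=(x^2y)^2\prod_{i=1}^4(-x)\bigl(E-g(\beta_i)\bigr)=x^8y^2\prod_{i=1}^4\bigl(E-g(\beta_i)\bigr)$, and the correct normalization is division by $x^4$ (the $E^4$-coefficient of \eqref{eq:E0000} being $y^2x^4$), not $x^6$. Once these slips are repaired, your argument is complete and, on the minimality point, stronger than the paper's.
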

\begin{proof}
From~\eqref{eq:E-B}, $xB^2=(x-1)(B-1)+xE$. Thus, from~\eqref{eq:B0000},
$B-1=xyE(xE+(x-1)(B-1))$, i.e.,
$$
B=1+\frac{x^2yE^2}{1-x(x-1)yE}.
$$
Therefore it holds that
$$
x\left(1+\frac{x^2yE^2}{1-x(x-1)yE} \right)^2=(x-1)\frac{x^2yE^2}{1-x(x-1)yE}+xE.
$$
Multiplying both sides by $x^{-1}(1-x(x-1)yE)^2$, we have
$$
\left({1-x(x-1)yE}+{x^2yE^2} \right)^2=(x-1){xyE^2}{(1-x(x-1)yE)}+E(1-x(x-1)yE)^2.
$$
Rewriting this equation in ascending order with respect to $E$ gives~\eqref{eq:E0000}.
\end{proof}
From \eqref{eq:B0000} and \eqref{eq:H-B}, we can also deduce 
a minimal polynomial
for $E(x,y,z)$.
\begin{thm} \label{prop:E-ftneq}
The generating function $E=E(x,y,z)$ 
is algebraic with minimal polynomial
\begin{align*}\label{eq:ftn_Exyz}
1-&\left(1-2yx(1-x)z+y(1-x)^2 (1-z) \right) E\\
+&y\left( 2 x^2  z^2 +x (1-x) (2 z^2 - 6 z + 3) +y x^2 (1-x)^2  z^2 + (1-x)^2 (1-z) - y x(1-x)^3  (1-z)   \right) E^2 \\
+& yx\left(  2 y x^2 (1-x) z^3 +x z (1 - z) (4 - z)  - 2 y x (1-x)^2 z(1-z) - 2   (1-x) (1 - z)^2\right) E^3\\
+&yx^2\left(yx^2 z^4 - y x (1-x) z^2 (1 - z) + (1 - z)^3\right) E^4. 
\end{align*}
\end{thm}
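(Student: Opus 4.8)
The plan is to eliminate the series $B=B(x,y)$ between the two facts already available: the algebraic equation~\eqref{eq:B0000},
$$
B=1+xyB^{2}\bigl(xB^{2}-(x-1)(B-1)\bigr),
$$
which presents $B$ as a root of a quartic over $\mathbb{Q}(x,y)$, and the rational expression~\eqref{eq:H-B}, which writes $E=E(x,y,z)$ as a rational function of $B$ over $\mathbb{Q}(x,y,z)$. Regarded as a polynomial in $B$, \eqref{eq:B0000} is primitive (its constant coefficient is $1$) and its specialization at $x=1$ is $yB^{4}-B+1$; the latter is irreducible over $\mathbb{Q}(y)$ (the substitution $B\mapsto 1/B$ turns it into $B^{4}-B^{3}+y$, which is linear in $y$ with $y$-coefficient a unit), and since the leading coefficient $x^{2}y$ does not vanish at $x=1$, equation~\eqref{eq:B0000} is itself irreducible over $\mathbb{Q}(x,y)$. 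Hence $[\mathbb{Q}(x,y,z)(B):\mathbb{Q}(x,y,z)]=4$, and by~\eqref{eq:H-B} the degree of $E$ over $\mathbb{Q}(x,y,z)$ divides $4$; the elimination will therefore produce an annihilating polynomial of degree exactly $4$.

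The elimination itself mimics the proof of Proposition~\ref{prop:H-ftneq} with $z$ kept general. Clearing denominators in~\eqref{eq:H-B}---equivalently, using $xB^{2}-(x-1)(B-1)=x\bigl((1-z)B+z\bigr)E$---yields the quadratic relation
$$
xB^{2}-\bigl((x-1)+x(1-z)E\bigr)B+\bigl((x-1)-xzE\bigr)=0 ,
$$
whose coefficients are linear in $E$, and substituting the same identity into~\eqref{eq:B0000} yields the cubic relation
$$
x^{2}y(1-z)E\,B^{3}+x^{2}yz E\,B^{2}-B+1=0 .
$$
Using the quadratic relation to express $B^{2}$, and then $B^{3}$, as $\mathbb{Q}(x,y,z,E)$-linear combinations of $1$ and $B$, the cubic relation collapses to a linear relation $\alpha B+\beta=0$ with $\alpha,\beta\in\mathbb{Q}(x,y,z,E)$; one checks that $\alpha$ is a nonzero power series, so $B=-\beta/\alpha$. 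Substituting this back into the quadratic relation and clearing denominators gives a degree-$4$ polynomial identity in $E$, which after removal of a spurious monomial factor (a power of $x$) coincides with the displayed quartic; this is, up to that monomial factor, the resultant $\mathrm{Res}_{B}$ of equations~\eqref{eq:B0000} and~\eqref{eq:H-B} (the latter after clearing denominators), both regarded as polynomials in $B$. As a check, setting $z=1$ recovers the minimal polynomial~\eqref{eq:E0000} of Proposition~\ref{prop:H-ftneq}, and the first few coefficients of $E(x,y,z)$ match the data in OEIS sequence A355174 \cite{OEIS}.

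Minimality comes for free from the elimination: the identity $B=-\beta/\alpha$ exhibits $B$ as a rational function of $E$ over $\mathbb{Q}(x,y,z)$, so $\mathbb{Q}(x,y,z)(E)=\mathbb{Q}(x,y,z)(B)$ has degree $4$ over $\mathbb{Q}(x,y,z)$; hence the degree-$4$ polynomial obtained above is irreducible and is the minimal polynomial of $E$. The main obstacle is simply the size of the symbolic computation---the displayed quartic is bulky, so the elimination and the coefficient matching are best carried out with a computer algebra system---together with the minor technical point of verifying that $\alpha$ is invertible as a formal power series, which legitimizes the passage $B=-\beta/\alpha$. There is no conceptual difficulty: equations~\eqref{eq:B0000} and~\eqref{eq:H-B}, together with the degree-$4$ algebraicity of $B$, already contain everything that is needed.
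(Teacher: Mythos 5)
Your proposal is correct and takes essentially the same route as the paper: the proof the authors omit is exactly the elimination of $B$ between~\eqref{eq:B0000} and~\eqref{eq:H-B} that they carried out for Proposition~\ref{prop:H-ftneq} in the special case $z=1$, namely using the cleared-denominator form of~\eqref{eq:H-B} to reduce the degree in $B$, solving for $B$ as a rational function of $E$, and substituting back. Your additional irreducibility and minimality argument goes slightly beyond what the paper records (it merely asserts minimality), but is a welcome supplement rather than a departure.
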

\begin{proof}
The idea of the proof is very similar to Proposition~\ref{prop:H-ftneq}, so we omit it.
\end{proof}

Here we consider some consequences derived from Theorem~\ref{prop:E-ftneq}.
\begin{enumerate}
\item 
Substituting $y$ with \({1} \) into Theorem~\ref{prop:E-ftneq} and setting 
$$E=E(x,1,z)=\sum_{n\ge 0}\sum_{t\ge0} \left|\IS_{n,t}(102,000) \right| z^t x^n,$$ we get a minimal polynomial
\begin{align*}
1-&\left(1 -2x(1-x)z + (1-x)^2 (1 - z) \right) E\\
+&\left(2 x^2  z^2 + x (1-x) (2 z^2 - 6 z + 3)  + x^2 (1-x)^2  z^2 + (1-x)^2 (1 - z)  - x(1-x)^3  (1 - z) \right) E^2 \\
+& x\left(2 x^2 (1-x) z^3+ x z (1 - z) (4 - z) -2 x (1-x)^2 z(1 - z)- 2 (1-x) (1 - z)^2  \right) E^3\\
+&x^2\left(x^2 z^4 - x (1-x) z^2 (1 - z) +  (1 - z)^3  \right) E^4,
\end{align*} 
which is essentially the same polynomial with~\cite[p. 55]{Testart24}.
\item 
Substituting $z$ with \({1} \) into Theorem~\ref{prop:E-ftneq}  and setting 
$$E:=E(x,y,1)=\sum_{n\ge 0}\sum_{m\ge0} \left|\IS_{n}\pex{m}(102,000) \right| y^m x^n,$$ 
we can restore the polynomial in Proposition~\ref{prop:H-ftneq}.
\item 
Substituting $y$ and $z$ with \({1} \) into~Theorem~\ref{prop:E-ftneq} and setting 
$$F:=E(x,1,1)=\sum_{n\ge 0} \left|\IS_{n}(102,000) \right| x^n,$$ we get a minimal polynomial
\begin{equation*}
1 - (1-2x+2x^2)F+ x(-1+4x-2x^2+x^3)F^2 + 2x^3(1-x)F^3+x^4 F^4,
\end{equation*}
which is 
a minimal polynomial
conjectured by Testart~\cite[Conjecture 69]{Testart24}. 
It means that Theorem~\ref{prop:E-ftneq}
is a generalization of the 
minimal polynomial
given by Testart.
\item
Substituting $z$ with \({0} \) into Theorem~\ref{prop:E-ftneq}  and setting 
$$E:=E(x,y,0)=\sum_{n\ge 0}\sum_{m\ge0} \left|\IS_{n,0}\pex{m}(102,000) \right| y^m x^n,$$ we get a minimal polynomial
\begin{equation*}
1-\left(1+y(1-x)^2\right) E+y(1-x)\left(1+2x-yx(1-x)^2\right) E^2-2yx(1-x) E^3+y x^2 E^4.
\end{equation*} 
\item 
Substituting $z$ with \({0} \) and $y$ with $1$ into Theorem~\ref{prop:E-ftneq}  and setting 
$$E:=E(x,1,0)=\sum_{n\ge 0} \left|\IS_{n,0}(102,000) \right| x^n,$$ we get a minimal polynomial
\begin{equation*}
1-(2-2x+x^2) E-(1-x)(1+x+2x^2-x^3) E^2-2x(1-x) E^3+ x^2 E^4.
\end{equation*} 
\item 
Substituting $x$ with \({1} \) into~Theorem~\ref{prop:E-ftneq} and setting 
$$G:=E(1,y,z)=\sum_{m\ge0}\sum_{t\ge0} \left|\bigcup_{n=m}^{2m}\IS_{n,t}\pex{m}(102,000) \right| z^t y^m,$$ we get a minimal polynomial
\begin{equation*}
1-G+2 y z^2 G^2+yz(1-z)(4-z) G^3+y\left(y z^4+(1-z)^3\right) G^4.
\end{equation*} 
\item 
Substituting $x$ with $1$ and $z$ with \({0} \) into~Theorem~\ref{prop:E-ftneq} and setting 
$$G_0:=E(1,y,0)=\sum_{m\ge0} \left|\bigcup_{n=m}^{2m}\IS_{n,0}\pex{m}(102,000) \right| y^m,$$ we get a minimal polynomial
\begin{equation*}
1 - G_0 + y G_0^4.
\end{equation*} 
\item 
Substituting $x$ and $z$ with \({1} \) into~Theorem~\ref{prop:E-ftneq} and setting 
$$g:=E(1,y,1)=\sum_{m\ge0} \left|\bigcup_{n=m}^{2m}\IS_{n}\pex{m}(102,000) \right| y^m,$$ we get a minimal polynomial
\begin{equation*}
1 - g + 2 y g^2 + y^2 g^4 \qquad\text{(equivalently $g=(1+yg^2)^2$.)}
\end{equation*} 
\end{enumerate}

Now we calculate the coefficients of $G(y,z)=E(1,y,z)$ and $G_0(y)=E(1,y,0)$.
Recall $b(y)=B(1,y)$ satisfies $b(y)=1+yb(y)^4$. From \eqref{eq:H-B} with $x=1$, we can deduce that 
\begin{equation}\label{eq:E-yz}
G(y,z)=\frac{b(y)^2}{b(y)+(1-b(y))z}=\frac{b(y)}{1-yb(y)^3 z}.
\end{equation}
Thus for nonnegative integer $t$, 
$$\left[z^t\right]G(y,z)=y^{t} b(y)^{3t+1}.
$$
By Lagrange inversion formula \cite[Sec. 3.3]{Ges16}, 
$$\left[y^m z^t\right]G(y,z)=\left[y^{m-t}\right] b(y)^{3t+1}=\frac{3t+1}{4m-t+1}\binom{4m-t+1}{m-t}\qquad(m\ge t).$$
In particular, $[y^m z^0]G(y,z)=[y^m]G_0(y)$.
These yield the following result:
\begin{thm}\label{cor:102000-distm_rankt}
For nonnegative integer \(m \geq t\), the number of 
\((102,000)\)-avoiding inversion sequences \(e\) with $\dist(e)=m$ and $\rank(e)=t$ is
\begin{equation*}
\left|\bigcup_{n=m}^{2m}\IS_{n,t}\pex{m}(102,000)\right|=\frac{3t+1}{3m+1}\binom{4m-t}{m-t},
\end{equation*}
where the sequence appears in~\cite[A355174]{OEIS}. In particular, 
\begin{equation}\label{eq:102000-3Fuss}
\left|\bigcup_{n=m}^{2m}\IS_{n,0}\pex{m}(102,000)\right|=\frac{1}{3m+1}\binom{4m}{m}, 
\end{equation}
i.e., the number of 
\((102,000)\)-avoiding inversion sequences \(e\) with $\dist(e)=m$ and $\rank(e)=0$ is given by
the $3$-Fuss-Catalan number {\cite[A002293]{OEIS}}. 
\end{thm}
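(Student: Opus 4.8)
The plan is to read the answer off the closed form of $G(y,z)=E(1,y,z)$ established just before the statement, and then to extract coefficients by a single application of the Lagrange inversion formula. First I would record why the specialization $x=1$ is the right one: by the definition of $E(x,y,z)$ its coefficient $[z^t y^m x^n]E(x,y,z)$ equals $\left|\IS_{n,t}\pex{m}(102,000)\right|$, and since $\dist(e)=m$ forces $m\le n\le 2m$ for $e\in\IS_n(102,000)$, setting $x=1$ sums over exactly this finite range, so that $[z^t y^m]E(1,y,z)=\left|\bigcup_{n=m}^{2m}\IS_{n,t}\pex{m}(102,000)\right|$. Then I would invoke the closed form $G(y,z)=\frac{b}{1-yb^3z}$ from \eqref{eq:E-yz}, obtained from \eqref{eq:H-B} at $x=1$ together with the identity $b-1=yb^4$, where $b=b(y)=B(1,y)$ satisfies $b=1+yb^4$ by \eqref{eq:B0000}. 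Expanding the geometric series in $z$ gives $[z^t]G(y,z)=y^tb^{3t+1}$, hence $[y^m z^t]G(y,z)=[y^{m-t}]b^{3t+1}$ whenever $m\ge t$.

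The second step is to compute $[y^k]b(y)^r$ for the series $b=1+yb^4$. Setting $c=b-1$, so that $c=y(1+c)^4$, the Lagrange inversion formula (as in \cite[Sec.~3.3]{Ges16}) gives, for $k\ge1$, $[y^k](1+c)^r=\frac{r}{k}[u^{k-1}](1+u)^{r+4k-1}=\frac{r}{k}\binom{r+4k-1}{k-1}$. Taking $r=3t+1$ and $k=m-t$ yields $[y^{m-t}]b^{3t+1}=\frac{3t+1}{m-t}\binom{4m-t}{m-t-1}$, and since $(4m-t)-(m-t-1)=3m+1$, a one-line rewriting converts this into the three equal forms $\frac{3t+1}{m-t}\binom{4m-t}{m-t-1}=\frac{3t+1}{4m-t+1}\binom{4m-t+1}{m-t}=\frac{3t+1}{3m+1}\binom{4m-t}{m-t}$, which is the claimed formula. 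Setting $t=0$ collapses it to $\frac{1}{3m+1}\binom{4m}{m}$, establishing \eqref{eq:102000-3Fuss} and identifying the $\rank=0$ count with the $3$-Fuss-Catalan number; the match with the two-parameter OEIS sequence A355174 then follows directly from the general formula.

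There is no deep obstacle here, since the generating-function identities carry all the weight; the points that require genuine care are (i) deriving the clean closed form $G(y,z)=b/(1-yb^3z)$ from \eqref{eq:H-B}, in particular using $b-1=yb^4$ to cancel a factor of $b$ from numerator and denominator, and (ii) being precise that the collapse $x\mapsto1$ corresponds to the \emph{finite} union $\bigcup_{n=m}^{2m}$ rather than an infinite sum over lengths. After that only the single instance of Lagrange inversion (valid for $m>t$) and the routine binomial manipulation reconciling the three forms of the answer remain; the edge case $m=t$ is checked directly from $b(0)=1$, where the formula correctly returns $\binom{3m}{0}=1$.
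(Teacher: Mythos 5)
Your proposal is correct and follows essentially the same route as the paper: specialize $E(x,y,z)$ at $x=1$ to get $G(y,z)=b/(1-yb^3z)$ via $b-1=yb^4$, expand the geometric series in $z$, and apply Lagrange inversion to $b=1+yb^4$ to extract $[y^{m-t}]b^{3t+1}$. The only cosmetic difference is the intermediate form of the Lagrange-inversion output ($\tfrac{3t+1}{m-t}\binom{4m-t}{m-t-1}$ versus the paper's $\tfrac{3t+1}{4m-t+1}\binom{4m-t+1}{m-t}$), which you correctly reconcile with the stated $\tfrac{3t+1}{3m+1}\binom{4m-t}{m-t}$.
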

\begin{rem}
Given an $(102,000)$-avoiding inversion sequence $e=(e_1,\ldots,e_n)$ with $\dist(e)=m$ and $\rank(e)=0$, we can obtain an $(102,000)$-avoiding inversion sequence $e'$ which satisfies 
$\dist(e')=m$ and $\left|\{ p: {e}_p'=\max(e') \}\right| =1$ as follows:
\begin{enumerate}
\item If $\left|\{ p: {e}_p=\max(e) \}\right| =1$ then set $e'=e$.
\item If $\left|\{ p: {e}_p=\max(e) \}\right| =2$ with $\prmx(e)=q$ then set $e'=(e_1,\ldots, e_{q-1}, e_{q+1},\ldots,e_n)$.
\end{enumerate}
Since $\rank(e)=0$, $q=\max(e)+1$. Thus, it is easy to show that this correspondence is reversible. Therefore, a bijective proof of~\eqref{eq:102000-3Fuss} 
induces a bijective proof of Theorem~\ref{cor:102000-bdistm}.
\end{rem}
We can also calculate the coefficients of $g(y)=E(1,y,1)$.
Substituting $z$ with \({1} \) into~\eqref{eq:E-yz} (or $x$ with $1$ into~\eqref{eq:E-B}), we get
\begin{equation*}\label{eq:fxy}
g(y)=b(y)^2,
\end{equation*}
which implies that 
$$\left[y^m\right]g(y)=\left[y^m\right]b(y)^2=\frac{2}{4m+2}\binom{4m+2}{m}.$$ 
This yields the following result:
\begin{cor}\label{cor:102000-distm}
For \(m \geq 0\), the number of 
\((102,000)\)-avoiding inversion sequences \(e\) with $\dist(e)=m$ is
\begin{equation*}
\left|\bigcup_{n=m}^{2m}\IS_{n}\pex{m}(102,000)\right|=\frac{1}{2m+1}\binom{4m+2}{m},
\end{equation*}
where the sequence appears in~\cite[A069271]{OEIS}.
\end{cor}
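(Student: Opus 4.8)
The plan is to specialize the trivariate generating function $E(x,y,z)$ of Section~\ref{sec-refined-enumeration} at $x=z=1$ and then extract the coefficient of $y^m$ by Lagrange inversion. First I would check that $g(y):=E(1,y,1)$ really is the generating function $\sum_{m\ge0}\bigl|\bigcup_{n=m}^{2m}\IS_{n}\pex{m}(102,000)\bigr|\,y^m$ asserted in the statement. Starting from $E(x,y,z)=\sum_{n,m,t}|\IS_{n,t}\pex{m}(102,000)|\,z^t y^m x^n$, putting $z=1$ collapses the rank grading and putting $x=1$ collapses the length grading, so that $[y^m]E(1,y,1)=\sum_{n}|\IS_{n}\pex{m}(102,000)|$, which is a \emph{finite} sum running over $m\le n\le 2m$ by the range constraint noted in Section~\ref{sec:102-000-avoiding} ($\dist(e)=m$ forces $m\le n\le 2m$). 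The substitution $x=1$ is legitimate because every coefficient of $y^m$ in $E(x,y,z)$ is a polynomial in $x$ and $z$ (of degree at most $2m$ in each), so no convergence issue arises.

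Next I would read off the closed form $g(y)=b(y)^2$, where $b(y)=B(1,y)$. This is immediate from \eqref{eq:E-B}, namely $E(x,y,1)=x^{-1}(B-1)(xB+1)+1$, by setting $x=1$; equivalently it follows from \eqref{eq:E-yz} at $z=1$ together with the relation $b=1+yb^4$. Recall that $b(y)$ satisfies the algebraic equation $b(y)=1+y\,b(y)^4$, obtained by setting $x=1$ in \eqref{eq:B0000}.

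Finally I would apply the Lagrange inversion formula to $b=1+yb^4$ (in the same form used in the proof of Lemma~\ref{lem:form-bnm}, or as in \cite[Sec.~3.3]{Ges16}): writing $c=b-1$, so that $c=y(1+c)^4$, and taking $F(t)=(1+t)^2$ with $F'(t)=2(1+t)$, one obtains
\[
[y^m]\,b(y)^2=[y^m](1+c)^2=\frac1m\,[t^{m-1}]\,2(1+t)(1+t)^{4m}=\frac2m\binom{4m+1}{m-1}=\frac1{2m+1}\binom{4m+2}{m},
\]
the last step being the one-line identity $\frac2m\binom{4m+1}{m-1}=\frac{2}{m!}\cdot\frac{(4m+1)!}{(3m+2)!}=\frac1{2m+1}\binom{4m+2}{m}$. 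This count is exactly $\bigl|\bigcup_{n=m}^{2m}\IS_{n}\pex{m}(102,000)\bigr|$, and comparison with the OEIS \cite{OEIS} identifies the sequence as A069271, completing the proof. I do not anticipate a genuine obstacle here: the bijection of Theorem~\ref{thm:IS}, Lemmas~\ref{lem:102000} and~\ref{lem:102000m}, and the functional equations \eqref{eq:B0000} and \eqref{eq:H-B} are already in place, so the corollary reduces to bookkeeping; the only points requiring a little care are the justification of the $x=1$ specialization and keeping the index shifts in the Lagrange inversion straight.
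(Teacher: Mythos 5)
Your proposal is correct and follows essentially the same route as the paper: specialize to $g(y)=E(1,y,1)=b(y)^2$ via \eqref{eq:E-B} (or \eqref{eq:E-yz}), then extract $[y^m]b(y)^2$ by Lagrange inversion applied to $b=1+yb^4$, which gives $\frac{2}{4m+2}\binom{4m+2}{m}=\frac{1}{2m+1}\binom{4m+2}{m}$. The only difference is that you spell out the Lagrange inversion computation and the justification of the $x=1$ substitution in more detail than the paper does.
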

The last row of Table~\ref{tab:ISmn} shows numbers $\left|\bigcup_{n=m}^{2m}\IS_n\pex{m}(102,000)\right|$ for $m=1,\ldots ,8$.
\begin{rem}
The following problems are left for future research.
\begin{enumerate}
\item Find closed forms for $\left|\IS_{n,t}\pex{m}(102,000)\right|$ and $\left|\IS_{n,t}(102,000)\right|$.
\item Find a simpler formula for
~\eqref{eq:102000-reform}.
\item Find bijective proofs of Theorem~\ref{cor:102000-distm_rankt} and Corollary~\ref{cor:102000-distm}.
\end{enumerate}
\end{rem}

\end{document}